\numberwithin{equation}{section}
\newtheorem{thm}{Theorem}[section]
\newtheorem{proposition}[thm]{Proposition}
\newtheorem{lem}[thm]{Lemma}
\theoremstyle{definition}
\newtheorem{Ass}[thm]{Assumption}
\newtheorem{rem}[thm]{Remark}
\DeclareMathOperator{\DIV}{div}
\newcommand{\R}{\mathbb{R}}
\newcommand{\Td}{\mathbb{T}^{d}}
\newcommand{\Rd}{\mathbb{R}^{d}}
\newcommand{\p}{\partial}
\newcommand {\f}{\frac}
\newcommand{\eps}{\varepsilon}
\newcommand{\diff}{\mathop{}\!\mathrm{d}}
\newcommand{\weaks}{\overset{\ast}{\rightharpoonup}}
\newcommand{\weak}{\rightharpoonup}
\newcommand{\doublewidetilde}[1]{{%
  \mathpalette\double@widetilde{#1}%
}}
\newcommand{\double@widetilde}[2]{%
  \sbox\z@{$\m@th#1\widetilde{#2}$}%
  \ht\z@=.9\ht\z@
  \widetilde{\box\z@}%
}
\author{Charles Elbar}
\address{{\it Charles Elbar:} Sorbonne Universit\'{e}, Laboratoire Jacques-Louis Lions (LJLL), F-75005 Paris, France}
\email{charles.elbar@sorbonne-universite.fr}
\thanks{}
\author{Beno\^\i t Perthame}
\address{{\it Beno\^\i t Perthame: } Sorbonne Universit{\'e}, CNRS, Universit\'{e} de Paris, Inria, Laboratoire Jacques-Louis Lions UMR7598, F-75005 Paris}
\email{Benoit.Perthame@sorbonne-universite.fr}
\author{Jakub Skrzeczkowski}
\address{{\it Jakub Skrzeczkowski: } Institute of Mathematics of Polish Academy of Sciences; Faculty of Mathematics, Informatics and Mechanics, University of Warsaw, Poland}
\email{jakub.skrzeczkowski@student.uw.edu.pl}
\thanks{Jakub Skrzeczkowski was supported by National Science Center, Poland through project no. 2019/35/N/ST1/03459.}
\begin{document}

\title[]{On the limit problem arising in the kinetic derivation of the Cahn-Hilliard equation}

\begin{abstract}
The non-local degenerate Cahn-Hilliard equation is derived from the Vlasov equation with long range attraction. We study the local limit as the delocalization parameter converges to $0$. The difficulty arises from the degeneracy which requires compactness estimates, but all necessary a priori estimates can be obtained only on the nonlocal quantities yielding almost no information on the limiting solution itself. We introduce a novel condition on the nonlocal kernel which allows us to exploit the available nonlocal a priori estimates. The condition, satisfied by most of the kernels appearing in the applications, can be of independent interest. Our approach is flexible and systems can be treated as well.
\end{abstract}

\keywords{}

\subjclass{35B40, 35D30, 35K25, 35K55}

\maketitle
\setcounter{tocdepth}{1}

\section{Introduction}
We consider the nonlocal PDE
\begin{equation}\label{eq:nonlocal_PDE_CH}
\partial_{t} \rho_{\eps} - \Delta \rho_{\eps} + \DIV (\rho_{\eps} \nabla \Delta (\rho_{\eps} \ast \omega_{\eps} \ast \omega_{\eps}) ) = 0
\end{equation}
arising in the kinetic derivation of the degenerate Cahn-Hilliard equation via hydrodynamic limit as in our recent paper \cite{elbar-mason-perthame-skrzeczkowski} which  makes the formal approach of Takata and Noguchi \cite{takata2018simple} fully rigorous. Furthermore, PDEs of the form \eqref{eq:nonlocal_PDE_CH} appear in the numerical analysis of the particle method as explained below. Concerning the notation, $\varepsilon>0$ is a small parameter, $\rho_{\varepsilon}: \R^d \times [0,T] \to [0,\infty)$ is the solution while  
$$
\omega_{\varepsilon}(x) = \frac{1}{\varepsilon^d} \omega\left(\frac{x}{\varepsilon}\right)
$$ is a usual mollification kernel (one can think of $\omega$ being smooth, nonnegative and compactly supported function but more general assumption will be presented below). The remaining question is whether one can consider the limit  $\varepsilon \to 0$ in \eqref{eq:nonlocal_PDE_CH} and arrive at
\begin{equation}\label{eq:local_PDE_CH}
\partial_{t} \rho - \Delta \rho + \DIV (\rho \nabla \Delta \rho) = 0.
\end{equation}

\noindent A natural strategy for such problems are so-called commutator estimates, originally studied in the context of renormalized solutions to transport equation \cite{MR1022305} and conservation of energy for weak solutions of Euler equation \cite{MR1298949} or more general hyperbolic systems \cite{MR3814601, MR4038422}. The commutator estimates control the error resulting from the fact that the mollification operator does not commute with nonlinear functions. In our case, they cannot be applied directly, as we do not have sufficient estimates on $\{\rho_{\eps}\}$. Indeed, both energy and entropy identities for \eqref{eq:nonlocal_PDE_CH} yield estimates on non-local quantities $\rho_{\eps} \ast \omega_{\eps}$, $\nabla \rho_{\eps} \ast \omega_{\eps}$ and $\Delta \rho_{\eps} \ast \omega_{\eps}$ (see Section \ref{sect:uniform_estimates}). In particular, we do not even know that $\{\rho_{\eps}\}$ is uniformly bounded in $L^2_{\text{loc}}((0,T)\times \R^d)$ (except in dimension $d=2$). In Appendix~\ref{app:conv_d=2_higher_integrability} we show that for a fairly arbitrary kernel (up to some weak growth assumptions at infinity), the direct commutator estimates work in dimension $d=2$. The same is true if one knows a priori that $\{\rho_{\eps}\}$ is uniformly integrable in $L^2((0,T)\times \R^d)$ (or even $L^2_{\text{loc}}((0,T)\times \R^d)$ if $\omega$ is compactly supported).  \\

\noindent Being not satisfied as we cannot cover the physical dimension $d=3$, our approach is to look for an appropriate assumption on $\omega$ so that we can exploit more the aforementioned nonlocal quantities. We were inspired by the recent work on Landau equation \cite{carrillo2020landau,carrillo2022convergence} where the authors consider the kernel $\omega(x) = e^{-(1+|x|^2)^{1/2}}$ which satisfies the crucial estimate
\begin{equation}\label{eq:estimate_for_Carrillo_kernel} 
|\nabla \omega(x)| \leq C\, \omega(x).
\end{equation}
Such inequalities allow to transform difficult-to-estimate terms by better understood ones, for instance $\varepsilon\, \rho_{\eps} \ast |\nabla \omega_{\eps}|$ can be estimated pointwisely by $\rho_{\eps} \ast \omega_{\eps}$ due to nonnegativity of $\rho_{\eps}$ and $\omega_{\eps}$. Nevertheless, \eqref{eq:estimate_for_Carrillo_kernel} is fairly restrictive - for instance, it excludes compactly supported kernels. In our setting, the following assumption turns out to be successful: 
\begin{Ass}\label{ass:kernel_assumption_double_moll_+_growth}
We assume that $\omega:\R^d \to [0,\infty)$ is a smooth function such that $\int_{\R^d} \omega(x) \diff x = 1$ and $\omega(x) = \omega(-x)$. Moreover, there exists an integrable kernel $f: \R^d \to [0,\infty)$ such that for all $x \in \R^d$
\begin{equation}\label{eq:ass:omega_double_mollification}
(|x| + |x|^2) \, |\nabla \omega(x)| \leq C\, \omega \ast f(x).
\end{equation}
Furthermore, we assume that $\omega$ has sufficient decay at $+\infty$:
\begin{equation}\label{ass:growth_omega}
\lim_{R\to\infty} \sup_{|x|=R} |x|^{d}\, \omega(x) = 0.    
\end{equation}
\end{Ass}
This covers the case of Gaussian $\omega(x) = \frac{1}{(2\pi)^{d/2}} e^{-|x|^2/2}$ and any nonnegative, compactly supported kernel by choosing $f= \omega$, see Lemmas \ref{lem:ass_smooth_compactly_supp} and \ref{lem:exponential_kernels}. Moreover, we can cover the kernel $\omega(x) = e^{-(1+|x|^2)^{1/2}}$ by choosing $f$ more carefully, see Lemma~\ref{lem:omega_like_carrillo}.\\

For the initial condition we suppose that $\rho(t=0)=\rho^{0}$ where $\rho^0$ satisfies 
\begin{equation}\label{Ass:init_cond}
\rho^0 \geq 0,\,\,\,\, \rho^0\in L^1(\R^d) \cap H^1(\R^d),\, \,\,\, \rho^0|\log\rho^0|\in L^1(\R^d), \,\,\,\, |x|^{2}\rho^0\in L^{1}(\R^d).
\end{equation}

Our main result reads: 
\begin{thm}\label{thm:single_eq}
Let $\{\rho_{\eps}\}_{\eps}$ be a sequence of solutions to \eqref{eq:nonlocal_PDE_CH} with initial condition $\rho^0$ satisfying~\eqref{Ass:init_cond}. Then, up to a subsequence not relabeled, $\rho_{\eps} \to \rho$ in $L^p(0,T; L^1(\R^d))$ for all $p \in [1,\infty)$ where $\rho$ is a weak solution of the degenerate Cahn-Hilliard equation~\eqref{eq:local_PDE_CH}.
\end{thm}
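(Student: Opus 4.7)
The plan is to establish strong compactness of $\{\rho_\eps\}$ in $L^p(0,T;L^1(\R^d))$ and then pass to the limit in the weak formulation of \eqref{eq:nonlocal_PDE_CH}. First, I would collect uniform a priori estimates from the natural identities: mass conservation, the entropy identity (multiplying \eqref{eq:nonlocal_PDE_CH} by $\log \rho_\eps$), the energy identity (multiplying by $-\Delta(\rho_\eps\ast \omega_\eps\ast \omega_\eps)$), and the second moment (multiplying by $|x|^2$). This yields uniform bounds of $\rho_\eps$ in $L^\infty(0,T;L^1)$ with the entropy $\rho_\eps|\log\rho_\eps|$ controlled, the moment $|x|^2\rho_\eps\in L^\infty(0,T;L^1)$, the nonlocal gradient $\nabla(\rho_\eps\ast\omega_\eps)\in L^\infty(0,T;L^2)$, and the dissipation bounds $\sqrt{\rho_\eps}\,\nabla\Delta(\rho_\eps\ast\omega_\eps\ast\omega_\eps)\in L^2((0,T)\times\R^d)$ together with $\nabla\sqrt{\rho_\eps}\in L^2((0,T)\times\R^d)$ coming from the entropy production. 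These are by themselves insufficient for local compactness since the gradient information lives only on $\rho_\eps\ast\omega_\eps$.

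The heart of the argument is to exploit Assumption~\ref{ass:kernel_assumption_double_moll_+_growth} to bridge the gap between the nonlocal estimates and estimates on $\rho_\eps$ itself. The assumption rescales to $|x|\,|\nabla\omega_\eps(x)|\leq C\,\eps\,(\omega\ast f)_\eps(x)$, so that for any test function $\phi$ the commutator $\int(\rho_\eps-\rho_\eps\ast\omega_\eps)\,\phi$ can be recast, via Taylor expansion of $\phi$ and nonnegativity of $\rho_\eps$, into terms dominated by $\eps\, \rho_\eps\ast(\omega\ast f)_\eps$ tested against $\nabla\phi$. This gives a quantitative bound $\rho_\eps-\rho_\eps\ast\omega_\eps\to 0$ in a negative Sobolev space, and more generally makes the nonlocal $H^1$ regularity of $\rho_\eps\ast\omega_\eps$ morally available for $\rho_\eps$ itself. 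Using the PDE to get $\partial_t\rho_\eps$ bounded in $L^1(0,T;H^{-s}(\R^d))$ for $s$ large enough, combined with this local-to-nonlocal comparison and tightness from the second moment, an Aubin-Lions/Kolmogorov style argument furnishes strong $L^p(0,T;L^2_{\mathrm{loc}})$ convergence of $\rho_\eps\ast\omega_\eps$ to some $\rho$, which by the comparison promotes to strong convergence of $\rho_\eps$ itself. Equi-integrability, ensured by entropy and second moment control, upgrades the convergence to $L^p(0,T;L^1(\R^d))$ as claimed.

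Finally, I would pass to the limit in the weak formulation. The linear terms $\partial_t \rho_\eps$ and $\Delta \rho_\eps$ are immediate once $\rho_\eps\to\rho$ in $L^p(0,T;L^1)$. For the flux term $\rho_\eps\nabla\Delta(\rho_\eps\ast\omega_\eps\ast\omega_\eps)$, the natural approach is to integrate by parts in the weak formulation so that the derivatives act on the test function, then use that $\nabla(\rho_\eps\ast\omega_\eps)\rightharpoonup \nabla\rho$ weakly in $L^2$ and $\rho_\eps\ast\omega_\eps\ast\omega_\eps\to\rho$ strongly (since $\omega\ast\omega$ again satisfies Assumption~\ref{ass:kernel_assumption_double_moll_+_growth}). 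The dissipation bound $\sqrt{\rho_\eps}\,\nabla\Delta(\rho_\eps\ast\omega_\eps\ast\omega_\eps)\in L^2$ combined with the strong $L^p_tL^1_x$ convergence of $\rho_\eps$ then allows the identification of $\rho\nabla\Delta\rho$ (suitably reformulated to make sense for weak solutions, e.g.\ as $\nabla\cdot(\rho\nabla\Delta\rho)=\Delta^2(\rho^2/2)-\DIV(\Delta\rho\nabla\rho)-\ldots$) in the limit.

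The main obstacle is the compactness step: extracting strong convergence of $\rho_\eps$ from estimates that only control the nonlocal quantities $\rho_\eps\ast\omega_\eps$ and $\nabla(\rho_\eps\ast\omega_\eps)$. This is exactly where Assumption~\ref{ass:kernel_assumption_double_moll_+_growth} is essential: without a structural relation such as $(|x|+|x|^2)|\nabla\omega|\leq C\,\omega\ast f$, one cannot transfer regularity of the mollified function back to $\rho_\eps$, and the classical commutator strategy breaks down outside $d=2$ as noted in the introduction.
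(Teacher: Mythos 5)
Your proposal has the right overall shape but misplaces both the main difficulty and the role of Assumption~\ref{ass:kernel_assumption_double_moll_+_growth}. First, the compactness of $\{\rho_\eps\}$ itself is \emph{not} the obstacle: you correctly list $\nabla\sqrt{\rho_\eps}\in L^2((0,T)\times\R^d)$ among the available bounds (it comes from the Fisher-information term $\int|\nabla\rho_\eps|^2/\rho_\eps$ in the entropy dissipation), and this is genuine gradient information on $\rho_\eps$, not on its mollification. Writing $\nabla\rho_\eps=2\sqrt{\rho_\eps}\,\nabla\sqrt{\rho_\eps}$ gives $\nabla\rho_\eps\in L^2(0,T;L^1(\R^d))$, which together with the $\partial_t\rho_\eps$ bound and the second-moment tightness yields $\rho_\eps\to\rho$ in $L^p(0,T;L^1(\R^d))$ by Lions--Aubin directly. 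Your proposed alternative --- deducing compactness of $\rho_\eps$ from compactness of $\rho_\eps\ast\omega_\eps$ by showing $\rho_\eps-\rho_\eps\ast\omega_\eps\to0$ in a negative Sobolev space --- is both unnecessary and insufficient: a vanishing difference in $H^{-s}$ does not upgrade strong $L^2_{\mathrm{loc}}$ convergence of the mollified sequence to strong convergence of $\rho_\eps$, and for $d\geq3$ you do not even know $\rho_\eps$ is bounded in $L^2_{\mathrm{loc}}$.

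The genuine difficulty, which your third step glosses over, is the identification of the limit of the flux term. After integrating by parts one must show that $(\nabla\varphi\,\rho_\eps)\ast\nabla\omega_\eps\to\DIV(\nabla\varphi\,\rho)$ \emph{strongly} in $L^2((0,T)\times\R^d)$, because the other factor $\Delta(\rho_\eps\ast\omega_\eps)$ converges only weakly in $L^2$; saying that the dissipation bound ``combined with the strong $L^p_tL^1_x$ convergence allows the identification'' does not produce this. The mechanism that actually works is: split off $\nabla\varphi(x)\cdot\nabla\rho_\eps\ast\omega_\eps$ (strongly convergent by the nonlocal $H^1$ compactness), Taylor-expand $\nabla\varphi(y)-\nabla\varphi(x)$, and treat the first-order term $J_1$ by proving convergence in $L^1$ (via a change of variables and $\int z_i\partial_j\omega=-\delta_{ij}$, which uses the decay hypothesis~\eqref{ass:growth_omega}) \emph{together with} a uniform bound in $L^p$ for some $p>2$; the latter comes from the pointwise domination $|J_1|\leq C\,\rho_\eps\ast\omega_\eps\ast f_\eps$ furnished by the $|x|\,|\nabla\omega|$ part of~\eqref{eq:ass:omega_double_mollification} plus Gagliardo--Nirenberg on $\rho_\eps\ast\omega_\eps\in L^\infty_tH^1_x$. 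Interpolating $L^1$ convergence against the $L^p$ bound, $p>2$, gives the needed strong $L^2_{\mathrm{loc}}$ convergence. The second-order remainder $J_2$ is where the $|x|^2\,|\nabla\omega|$ part of the assumption yields the extra factor $\eps$, as you correctly intuited. Without the $L^1$-plus-$L^{p>2}$ interpolation for $J_1$, the argument does not close, and this is precisely the step absent from your proposal.
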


Our methods are quite flexible and they allow to study the same question for systems of the type
\begin{equation}\label{eq:PDE_systemIntr}
\partial_{t} \rho^i_{\eps} - \Delta \rho^i_{\eps} - \DIV \left(\rho^i_{\eps} \nabla \Delta \sum_{j=1}^N K^{i,j}_{\eps} \ast \rho^j_{\eps}\right) = 0, \qquad i=1,...,N,
\end{equation}
under some additional structural assumptions. This is discussed in Section~\ref{sect:system} (see Theorem~\ref{thm:thm_for_the_system}).\\

To conclude the introduction, let us mention that similar problems have been studied in the literature for the porous media equation. Up to our knowledgle, the first result of this type was obtained by Lions and Mas-Gallic \cite{MR1821479} for the PDE
$$
\partial_t \rho_{\eps} = \DIV(\rho_{\eps} \nabla \rho_{\eps} \ast \omega_{\eps} \ast \omega_{\eps}) 
$$
Then, the cases of cross-diffusion systems and general nonlinear diffusion equations has been considered in \cite{carrillo2023nonlocal} and \cite{burger2022porous,Hecht2023porous}, respectively. These problems are motivated by the numerical algorithms called particle methods. More precisely, consider $N$ particles moving according to the system of ODEs
$$
X_i'(t) = - \f{1}{N} \sum_{j \neq i} \nabla W(X_i(t) - X_j(t)).
$$
Then, the empirical measure $\mu^N(t) = \frac{1}{N}\sum_{i=1}^N \delta_{X_i(t)}$ solves in the sense of distributions
$$
\partial_t \mu^N - \DIV(\mu^N \,\nabla \mu^N \ast W) = 0
$$
so in the limit $N \to \infty$
$$
\partial_t \mu - \DIV(\mu\, \nabla \mu \ast W) = 0.
$$
If $W \weaks \delta_0$, we recover the porous media equation. For numerical experiments based on this method we refer to \cite{MR1821479}.\\

\noindent Let us also comment that \eqref{eq:nonlocal_PDE_CH} could be called the nonlocal Cahn-Hilliard equation but it should not be confused with the nonlocal effect in the following PDE
\begin{equation}\label{eq:CH_deg_nonlocal}
\partial_t \rho_{\eps} = \DIV \big(\rho_{\eps} \nabla (B_{\eps}[\rho_{\eps}] + F'(\rho_{\eps})) \big), 
\end{equation}
where $F$ is the potential and $B_{\eps}[u]$ is a nonlocal operator approximating $-\Delta u$, i.e.
$$
B_\eps[u](x) = \f{1}{\eps^{2}}(u(x)-\omega_{\eps}\ast u(x))=\f{1}{\eps^{2}}\int_{\Td}\omega_{\eps}(y)(u(x)-u(x-y)) \diff y.
$$
The equation was obtained by Giacomin and Lebowitz \cite{MR1453735} as a derivation of the degenerate Cahn-Hilliard equation 
\begin{equation}\label{eq:CH_deg_local}
\partial_t \rho = \DIV \big(\rho \nabla (-\Delta \rho + F'(\rho)) \big), 
\end{equation}
proposed in \cite{Cahn-Hilliard-1958} to model the dynamics of phase separation in binary mixtures. The question of passing to the limit from \eqref{eq:CH_deg_nonlocal} to \eqref{eq:CH_deg_local} was addressed only recently in \cite{elbar-skrzeczkowski} for a single equation and in \cite{carrillo2023degenerate} for a system. This problem is fairly different from \eqref{eq:nonlocal_PDE_CH} as energy and entropy yields strong compactness of $\{\rho_{\eps}\}$ and $\{\nabla \rho_{\eps}\}$ rather than their mollifications $\{\rho_{\eps} \ast \omega_{\eps}\}$ and $\{\nabla \rho_{\eps} \ast \omega_{\eps}\}$ as in the case of \eqref{eq:nonlocal_PDE_CH}.  We also remark that the same problem was studied in the context of the nondegenerate Cahn-Hilliard equation \cite{MR4408204,MR4093616,MR4198717,MR4248454}
$$
\partial_t \rho_{\varepsilon} = \DIV \nabla \mu_{\eps} ,\qquad 
\mu_{\eps} = B_{\eps}[\rho_{\eps}] + F'(\rho_\eps).
$$
Here, one obtains immediately an estimate on $\{\nabla \mu_{\varepsilon}\}$ (by multiplying by $\mu_{\eps}$) which greatly simplifies identification of the limits. Nevertheless, we point out that in \cite{MR4408204,MR4093616,MR4198717,MR4248454} the difficulty is rather the low regularity of the potential and the kernel.\\

The structure of the paper is as follows. In Section~\ref{sect:examples_kernels} we show that Assumption~\ref{ass:kernel_assumption_double_moll_+_growth} is satisfied by a wide class of kernels. In Section~\ref{sect:uniform_estimates} we  gather the a priori estimates necessary for the proof of the main result, Theorem~\ref{thm:single_eq}, which is proved in Section~\ref{sect:main_result}. In the last Section~\ref{sect:system}, we show how the result can be extended to systems. Finally, Appendix~\ref{app:conv_d=2_higher_integrability} is dedicated to the proof of Theorem~\ref{thm:single_eq} in dimension 2 for a broader class of kernels.

\section{Examples of kernels satisfying Assumption \ref{ass:kernel_assumption_double_moll_+_growth}}\label{sect:examples_kernels}

Three particular classes of kernels are usually found in the literature and we show they satisfy Assumption \ref{ass:kernel_assumption_double_moll_+_growth}. In fact, in all of those examples, we only need to verify condition \eqref{eq:ass:omega_double_mollification}.   
\begin{lem}\label{lem:ass_smooth_compactly_supp}
Let $\omega:\R^d \to [0,\infty)$ be a smooth function such that $\int_{\R^d} \omega(x) \diff x = 1$. Suppose that $\omega$ is supported on the unit ball $\{x \in \R^d: |x| \leq 1\}$ and $\omega>0$ on the interior $\{x \in \R^d: |x| < 1\}$. Then, $\omega$ satisfies \eqref{eq:ass:omega_double_mollification} with $f = \omega$. 
\end{lem}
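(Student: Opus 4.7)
The plan is to combine two simple observations: compactness of $\supp \omega$ controls the left-hand side of \eqref{eq:ass:omega_double_mollification}, while positivity of $\omega$ on the open unit ball forces $\omega \ast \omega$ to be strictly positive on a sufficiently large set. As the statement notes, condition \eqref{ass:growth_omega} is automatic for compactly supported $\omega$, so only \eqref{eq:ass:omega_double_mollification} requires verification.

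First, I would note that since $\omega \in C^\infty_c(\R^d)$ is supported in $\overline{B(0,1)}$, its gradient $\nabla\omega$ is supported in the same set and uniformly bounded, while the polynomial $|x|+|x|^2$ is bounded by $2$ there. Hence the left-hand side of \eqref{eq:ass:omega_double_mollification} is supported in $\overline{B(0,1)}$ and majorised by $2\,\|\nabla\omega\|_{L^\infty}$ on that set. In particular, the inequality is trivial for $|x|>1$, and the task reduces to establishing a uniform positive lower bound on $(\omega \ast \omega)(x)$ over $\overline{B(0,1)}$.

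Second, I would show that $(\omega \ast \omega)(x) > 0$ on the open ball $B(0,2)$. Indeed, for every $|x|<2$ the two open unit balls $B(0,1)$ and $B(x,1)$ intersect in a set of positive Lebesgue measure, on which both $\omega(y)$ and $\omega(x-y)$ are strictly positive by assumption; thus the integral defining $(\omega \ast \omega)(x)$ is strictly positive. Since $\omega \ast \omega$ is continuous (convolution of two continuous, compactly supported functions) and $\overline{B(0,1)}$ is a compact subset of the open set $B(0,2)$, a compactness argument yields
$$
c_0 \;:=\; \min_{|x|\leq 1}\,(\omega \ast \omega)(x) \;>\; 0.
$$

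Putting the two steps together, for $|x|\leq 1$ one has
$$
(|x|+|x|^2)\,|\nabla\omega(x)| \;\leq\; 2\,\|\nabla\omega\|_{L^\infty} \;\leq\; \frac{2\,\|\nabla\omega\|_{L^\infty}}{c_0}\,(\omega \ast \omega)(x),
$$
so that \eqref{eq:ass:omega_double_mollification} holds with $f = \omega$ and $C = 2\,\|\nabla\omega\|_{L^\infty}/c_0$. I do not foresee any substantive obstacle here: the only mildly non-trivial input is the strict positivity of $\omega \ast \omega$ on the open ball of radius $2$, and this is an immediate consequence of the assumption that $\omega$ is strictly positive on the interior of its support.
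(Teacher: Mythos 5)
Your proof is correct and follows essentially the same route as the paper's: bound the left-hand side by a constant on the closed unit ball (it vanishes outside), then use the strict positivity of $\omega$ on the open ball together with continuity and compactness to get a positive lower bound on $\omega\ast\omega$ over $\overline{B(0,1)}$. Your explicit remark that the two unit balls overlap in positive measure for $|x|<2$ just spells out what the paper leaves implicit.
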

\begin{proof}
To prove \eqref{eq:ass:omega_double_mollification}, we only need to consider $|x| \leq 1$. By smoothness and compact support of $\omega$, there exists a constant such that $(|x| + |x|^2) \, |\nabla \omega(x)| \leq C$ and it remains to prove that $\inf_{|x|\leq1} \omega \ast \omega(x) > 0$. For any $|x|\leq1$, we see from the formula $\omega\ast\omega(x) = \int_{|y|\leq 1} \omega(x-y) \, \omega(y) \diff y$ that $\omega\ast\omega(x)>0$. As any continuous function attains its infimum on a compact set, the conclusion follows.  
\end{proof}

\begin{lem}\label{lem:exponential_kernels}
Let $\omega(x) = \frac{1}{({2\pi})^{d/2}} e^{-|x|^2/2}$. Then, $\omega$ satisfies \eqref{eq:ass:omega_double_mollification} with $f=\omega$. 
\end{lem}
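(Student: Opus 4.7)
The plan is to verify \eqref{eq:ass:omega_double_mollification} by reducing both sides to explicit Gaussians and observing that the ratio is bounded.

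First I would compute the gradient directly: since $\omega(x) = (2\pi)^{-d/2} e^{-|x|^2/2}$, differentiation gives $\nabla \omega(x) = -x\, \omega(x)$, hence $|\nabla \omega(x)| = |x|\, \omega(x)$. Therefore the left-hand side of \eqref{eq:ass:omega_double_mollification} equals $(|x|^2 + |x|^3)\, \omega(x)$.

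Next I would compute $\omega \ast \omega$ explicitly. This is a classical Gaussian computation: completing the square in $|x-y|^2 + |y|^2 = 2|y - x/2|^2 + |x|^2/2$ and integrating in $y$ gives
\begin{equation*}
\omega \ast \omega(x) = \frac{1}{(4\pi)^{d/2}} e^{-|x|^2/4},
\end{equation*}
which is the standard fact that the sum of two independent $N(0,I)$ Gaussians is $N(0,2I)$.

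It then suffices to verify that
\begin{equation*}
(|x|^2 + |x|^3)\, \frac{1}{(2\pi)^{d/2}} e^{-|x|^2/2} \leq C\, \frac{1}{(4\pi)^{d/2}} e^{-|x|^2/4},
\end{equation*}
which, after absorbing constants, reduces to showing that the function $x \mapsto (|x|^2 + |x|^3)\, e^{-|x|^2/4}$ is bounded on $\R^d$. This is immediate: the function is continuous and decays to $0$ as $|x|\to\infty$, hence attains a finite maximum. This yields the desired constant $C$ and completes the verification with $f = \omega$. There is no real obstacle here; the only mild care needed is keeping track of the normalizing constants when comparing $\omega$ and $\omega \ast \omega$.
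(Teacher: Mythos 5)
Your proof is correct and follows essentially the same route as the paper: compute $|\nabla\omega(x)| = |x|\,\omega(x)$, evaluate $\omega\ast\omega$ explicitly as a Gaussian of doubled variance, and conclude from the global boundedness of $(|x|^2+|x|^3)\,e^{-|x|^2/4}$. (Your normalization $(4\pi)^{-d/2}$ for $\omega\ast\omega$ is in fact the correct one; the discrepancy with the paper's stated constant is immaterial since only a constant multiple is needed.)
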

\begin{proof}
For Gaussians, we know that
\begin{equation*}
\omega \ast \omega(x) 
=  \frac{1}{(2\pi)^d} e^{-|x|^2/4} \int_{\R^d} e^{-\left|\frac{x}{\sqrt{2}} - \sqrt{2} y\right|^2/2} \diff y = \frac{1}{(2\pi)^{d/2}}  e^{-|x|^2/4}.
\end{equation*}
Therefore, since the function $(|x|^2 + |x|^3)\, e^{-|x|^2/4}$ is globally bounded, we find
$$
(|x| + |x|^2)\, |\nabla \omega(x)| \leq C\,(|x|^2 + |x|^3)\, e^{-|x|^2/2} \leq C\, e^{-|x|^2/4} = C\, \omega\ast\omega(x).
$$
\end{proof}

\begin{lem}\label{lem:omega_like_carrillo}
Let $\omega(x) = e^{-(1+|x|^2)^{1/2}}$. Then, $\omega$ satisfies \eqref{eq:ass:omega_double_mollification} with $f(x) = e^{-(1+|x|^2/3)^{1/2}}$.
\end{lem}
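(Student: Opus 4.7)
The strategy is to split \eqref{eq:ass:omega_double_mollification} into a pointwise upper bound on $(|x|+|x|^2)|\nabla\omega(x)|$ and a pointwise lower bound on $\omega\ast f(x)$, exploiting the fact that $f$ has been tuned to decay strictly more slowly than $\omega$: at infinity $\omega(x)\sim e^{-|x|}$ while $f(x)\sim e^{-|x|/\sqrt{3}}$. This exponential gap will absorb the polynomial weight $|x|+|x|^2$ with room to spare, unlike in Lemma~\ref{lem:exponential_kernels} where one could use the explicit Gaussian convolution identity.

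For the upper bound, a direct chain-rule computation gives
$$\nabla\omega(x) = -\frac{x}{(1+|x|^2)^{1/2}}\,\omega(x),$$
which immediately yields $|\nabla\omega(x)|\leq \omega(x) = e^{-(1+|x|^2)^{1/2}}$ pointwise. For the lower bound on $\omega\ast f(x)$, the plan is to restrict the convolution integral to the unit ball $B(x,1)$ centered at $x$: if $y\in B(x,1)$, then $|x-y|\leq 1$ forces $\omega(x-y)\geq e^{-\sqrt{2}}$, and $|y|\leq |x|+1$ combined with the radial monotonicity of $f$ forces $f(y)\geq e^{-(1+(|x|+1)^2/3)^{1/2}}$. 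Integrating over $B(x,1)$ yields
$$\omega\ast f(x) \;\geq\; |B(0,1)|\,e^{-\sqrt{2}}\,e^{-(1+(|x|+1)^2/3)^{1/2}}.$$

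Combining the two estimates, the lemma reduces to showing that
$$(|x|+|x|^2)\,\exp\!\Big(-(1+|x|^2)^{1/2}+(1+(|x|+1)^2/3)^{1/2}\Big)$$
is uniformly bounded in $x\in\R^d$. On any compact set this is immediate by continuity, since the factor $|x|$ makes the expression vanish at the origin. As $|x|\to\infty$, the difference of the two square roots expands to $(1/\sqrt{3}-1)|x|+O(1)$, so the exponential factor decays like $e^{-(1-1/\sqrt{3})|x|}$, which easily dominates the polynomial prefactor.

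I do not foresee any substantial obstacle: the argument is essentially bookkeeping once one decides to restrict the convolution to the ball centered at $x$ and to exploit the strict inequality $1/\sqrt{3}<1$ between the decay rates of $f$ and $\omega$. The constant $1/3$ in the definition of $f$ plays no distinguished role beyond enforcing this strict inequality; any factor strictly smaller than $1$ would do, with the radius of the convolution ball adjusted accordingly.
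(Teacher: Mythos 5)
Your proof is correct, and the heart of it—the lower bound on $\omega\ast f$—is obtained by a genuinely different mechanism than in the paper. The paper bounds the exponent $\sqrt{1+|x-y|^2}+\sqrt{1+|y|^2/3}$ from above \emph{globally} in $y$ via the algebraic inequalities $\sqrt{a}+\sqrt{b}\leq\sqrt{2(a+b)}$ and $\sqrt{a+b}\leq\sqrt{a}+\sqrt{b}$, splitting off a term $\sqrt{2}\,\bigl\lvert\frac{\sqrt3}{2}x-\frac{2}{\sqrt3}y\bigr\rvert^2$-type factor whose integral over all of $\R^d$ is an $x$-independent constant; this yields $\omega\ast f(x)\geq c\,e^{-\frac{\sqrt2}{2}\sqrt{1+|x|^2}}$. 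You instead localize the convolution to the unit ball $B(x,1)$ and use only the radial monotonicity of $f$ together with $\omega(x-y)\geq e^{-\sqrt2}$ there, obtaining $\omega\ast f(x)\geq c\,e^{-\sqrt{1+(|x|+1)^2/3}}$. Both lower bounds have the form $c\,e^{-\alpha|x|+O(1)}$ with $\alpha<1$ (yours with $\alpha=1/\sqrt3\approx0.577$, the paper's with $\alpha=\sqrt2/2\approx0.707$), and either suffices to absorb the polynomial weight, so the discrepancy in constants is immaterial. Your localization argument is more elementary and more robust—it applies verbatim to any radially decreasing integrable $f$ with strictly slower exponential decay than $\omega$, as you note—whereas the paper's computation is tailored to the specific pair of exponents but avoids discarding part of the integral. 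The remaining steps (the exact identity $\nabla\omega(x)=-x(1+|x|^2)^{-1/2}\omega(x)$ giving $|\nabla\omega|\leq\omega$, and the global boundedness of a polynomial times a decaying exponential) match the paper's. No gaps.
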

\begin{proof}
We need to estimate the convolution $\omega \ast f$ from below so we need to estimate the expression $\sqrt{1+|x-y|^2} + \sqrt{1+|y|^2/3}$ from above. Using $\sqrt{x} + \sqrt{y} \leq \sqrt{2(x+y)}$ and $\sqrt{x+y} \leq \sqrt{x} + \sqrt{y}$, we have
\begin{align*}
    \sqrt{1+|x-y|^2} + \sqrt{1+|y|^2/3} &\leq 
    \sqrt{2} \, \sqrt{2+ |x|^2 - 2x \cdot y + \frac{4}{3} |y|^2}\\
    &\leq \sqrt{2} \sqrt{\frac{1 + |x|^2}{4} + \left|\frac{\sqrt{3}}{2}x - \frac{2}{\sqrt{3}}y \right|^2 + \frac{7}{4}} \\
    &\leq \frac{\sqrt{2}}{2} \sqrt{1+|x|^2} + \sqrt{2}\, \sqrt{\left|\frac{\sqrt{3}}{2}x - \frac{2}{\sqrt{3}}y \right|^2 + \frac{7}{4}}.
\end{align*}
Note that the integral $\int_{\R^d} e^{-\sqrt{2}\, \sqrt{\left|\frac{\sqrt{3}}{2}x - \frac{2}{\sqrt{3}}y \right|^2 + \frac{7}{4}}} \diff y$ is a constant independent of $x$ (by a change of variables). Therefore,
$$
e^{-\frac{\sqrt{2}}{2} \sqrt{1+|x|^2}} \leq C\, \omega \ast f(x). 
$$
We conclude by observing that the function $(|x| + |x|^2)\, e^{-\left(1-\frac{\sqrt{2}}{2}\right) \sqrt{1+|x|^2}}$ is globally bounded
\begin{multline*}
(|x| + |x|^2)\, |\nabla \omega(x)| \leq C\, (|x| + |x|^2)\, e^{- \sqrt{1+|x|^2}} = \\ = 
 C\, (|x| + |x|^2)\, e^{-\left(1-\frac{\sqrt{2}}{2}\right) \sqrt{1+|x|^2}} e^{-\frac{\sqrt{2}}{2} \sqrt{1+|x|^2}} \leq C\,e^{-\frac{\sqrt{2}}{2} \sqrt{1+|x|^2}} \leq C\, \omega \ast f(x).
\end{multline*}
\end{proof}

\section{Uniform estimates for \eqref{eq:nonlocal_PDE_CH} and compactness}\label{sect:uniform_estimates}

The first immediate estimate is the conservation of mass. Integrating the equation in space we obtain an $L^{\infty}(0,T; L^{1}(\R^d))$ control on the solution. Moreover,
the nonlocal equation~\eqref{eq:nonlocal_PDE_CH} comes with an energy/entropy structure. Defining 
\begin{align}\label{eq:energy/entropy}
E_{\eps}[\rho]= \int_{\R^{d}}\f{|\nabla \rho\ast\omega_{\eps}|^{2}}{2} \diff x,\qquad \Phi[\rho]=\int_{\R^{d}}\rho\log(\rho) \diff x,   
\end{align}
we obtain the dissipation equalities:
\begin{align}
&\f{d E_{\eps}[\rho]}{dt}+\int_{\R^d}\left|\Delta \rho\ast\omega_{\eps}\right|^{2} \diff x +\int_{\Omega}\rho\left|\nabla\Delta\rho\ast\omega_{\eps}\ast\omega_{\eps}\right|^{2} \diff x=0, \label{eq:energy_diss}\\
&\f{d\Phi[\rho]}{dt}+\int_{\R^{d}}\f{|\nabla\rho|^{2}}{\rho} \diff x +\int_{\R^{d}}\left|\Delta\rho\ast\omega_{\eps}\right|^{2} \diff x=0. \label{eq:entropy_diss} 
\end{align}
Of course one has to be careful with the entropy equality, as $\rho\log(\rho)$ can be negative when $\rho$ is small and one needs to show that its negative part is integrable. 

\begin{proposition}\label{prop:bounds}
Suppose the initial condition $\rho^0$ satisfies~\eqref{Ass:init_cond}. Then, there exists a unique nonnegative weak solution to \eqref{eq:nonlocal_PDE_CH} satisfying the following bounds, uniformly with respect to~$\eps$:
\begin{enumerate}[label=(\Alph*)]
\item $\{\rho_{\eps}\}_{\eps}\in L^{\infty}(0,T;L^{1}(\R^{d})\cap L\log L(\R^{d}))$,\label{estim3}
\item $\{\p_{t}\rho_{\eps}\}_{\eps}\in L^{2}(0,T;H^{-k}(\R^{d}))$ for some $k$, \label{estim6}
\item $\{\sqrt{\rho_{\eps}}\,\nabla\Delta\rho_{\eps}\ast\omega_{\eps}\ast\omega_{\eps}\}_{\eps}\in L^{2}((0,T)\times\R^{d})$,\label{estim5}
\item $\{\rho_{\eps}\ast\omega_{\eps}\}_{\eps}\in L^{\infty}(0,T;H^{1}(\R^{d}))\cap L^{2}(0,T;H^{2}(\R^{d}))$, \label{estim1}
\item $\{|x|^{2}\rho_{\eps}\}_{\eps}\in L^{\infty}(0,T;L^{1}(\R^{d}))$,\label{estimnew}
\item $\{\nabla\sqrt{\rho_{\eps}}\}_{\eps} \in L^{2}(0,T;L^{2}(\R^{d}))$,\label{estim2}
\item $\{\nabla \rho_{\eps}\}_{\eps}\in L^{2}(0,T; L^1(\R^{d}))$.\label{estim4}
\end{enumerate}
Moreover, we can extract a subsequence such that
\begin{align}
&\rho_{\eps}\to\rho \, \text{ strongly in $L^{p}(0,T;L^{1}(\Rd))$},\, p<\infty, \label{conv1}\\
& \rho_{\eps}\ast \omega_{\eps}\rightharpoonup \rho \, \text{ weakly in $L^{\infty}(0,T;H^{1}(\Rd))\cap L^{2}(0,T;H^{2}(\R^{d}))$},\label{conv2}\\
& \rho_{\eps}\ast \omega_{\eps}\to \rho \,\text{ a.e. and strongly in $L^{2}(0,T;H^{1}_{\text{loc}}(\R^{d}))$}.\label{conv3} 
\end{align}
\end{proposition}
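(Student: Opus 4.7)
The plan is as follows. Existence and uniqueness of a weak solution at fixed $\eps>0$ is standard for such nonlocal parabolic problems (the leading part of \eqref{eq:nonlocal_PDE_CH} is $\partial_t - \Delta$); the bulk of the argument is the uniform estimates (A)--(G) and the compactness.

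\emph{Energy, entropy, mass.} Integrating \eqref{eq:nonlocal_PDE_CH} in $x$ gives mass conservation. Testing with $-\Delta(\rho_\eps\ast\omega_\eps\ast\omega_\eps)$ (using that $\omega$ is even) yields \eqref{eq:energy_diss}, providing an $L^\infty(0,T;L^2)$ control on $\nabla\rho_\eps\ast\omega_\eps$, the dissipation (C), and $\Delta\rho_\eps\ast\omega_\eps\in L^2((0,T)\times\R^d)$. Testing with $\log\rho_\eps$ (after regularization $\log(\rho_\eps+\delta)$ and $\delta\to 0$) delivers \eqref{eq:entropy_diss}, hence $\int\rho_\eps\log\rho_\eps$ uniformly bounded and $\nabla\sqrt{\rho_\eps}\in L^2((0,T)\times\R^d)$, namely (F).

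\emph{Moment, $W^{1,1}$, Sobolev upgrade.} Multiplying \eqref{eq:nonlocal_PDE_CH} by $|x|^2$ and integrating by parts,
\begin{equation*}
\frac{d}{dt}\int_{\R^d}|x|^2\rho_\eps\diff x \;=\; 2d\int_{\R^d}\rho_\eps\diff x + 2\int_{\R^d} x\sqrt{\rho_\eps}\cdot\sqrt{\rho_\eps}\,\nabla\Delta\rho_\eps\ast\omega_\eps\ast\omega_\eps\diff x,
\end{equation*}
and Cauchy--Schwarz against (C) absorbs the second term, so Gronwall produces (E). The moment bound together with standard arguments controls the negative part of $\rho_\eps\log\rho_\eps$, upgrading the $\Phi$-bound to the $L\log L$ claim of (A). Writing $\nabla\rho_\eps = 2\sqrt{\rho_\eps}\,\nabla\sqrt{\rho_\eps}$ and applying Cauchy--Schwarz with (A) and (F) gives (G). Finally, $\rho_\eps\ast\omega_\eps\in L^\infty(0,T;L^1)$ together with $\nabla\rho_\eps\ast\omega_\eps\in L^\infty(0,T;L^2)$ and the Gagliardo--Nirenberg inequality $\|u\|_{L^2}\le C\|u\|_{L^1}^{2/(d+2)}\|\nabla u\|_{L^2}^{d/(d+2)}$ give $\rho_\eps\ast\omega_\eps\in L^\infty(0,T;L^2)$, hence $L^\infty(0,T;H^1)$; adding $\Delta(\rho_\eps\ast\omega_\eps)\in L^2((0,T)\times\R^d)$ completes (D).

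\emph{Time derivative, compactness, convergences.} For (B), (A) controls $\Delta\rho_\eps$ in $H^{-k}$ for $k$ large, and the Cauchy--Schwarz factorization $\rho_\eps\,\nabla\Delta\rho_\eps\ast\omega_\eps\ast\omega_\eps = \sqrt{\rho_\eps}\cdot\sqrt{\rho_\eps}\,\nabla\Delta\rho_\eps\ast\omega_\eps\ast\omega_\eps\in L^2(0,T;L^1(\R^d))$ via (A) and (C) puts the divergence term in $L^2(0,T;W^{-1,1})$. An Aubin--Lions argument with $\{\rho_\eps\}\subset L^2(0,T;W^{1,1}(\R^d))$ (from (A)+(G)), $\{\partial_t\rho_\eps\}\subset L^2(0,T;H^{-k})$, and tightness from (E), gives relative compactness in $L^2(0,T;L^1(\R^d))$; interpolation with $L^\infty(L^1)$ extends it to \eqref{conv1}. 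Weak convergence \eqref{conv2} is immediate from (D). For \eqref{conv3}, the triangle inequality $\|\rho_\eps\ast\omega_\eps-\rho\|_{L^1}\le \|\rho_\eps-\rho\|_{L^1}+\|\rho\ast\omega_\eps-\rho\|_{L^1}$ together with \eqref{conv1} and the approximation-of-identity property yields the a.e.\ statement, while strong convergence in $L^2(0,T;H^1_{\mathrm{loc}})$ follows by Aubin--Lions from weak $L^2(H^2)$ and strong $L^2(L^2_{\mathrm{loc}})$ convergence.

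\emph{Main obstacle.} The conceptual difficulty is that all dissipative bounds naturally live on the mollified quantities: strong compactness of $\{\rho_\eps\}$ itself has to be squeezed out of the $L^2(W^{1,1})$ estimate together with the second moment bound and the equation, rather than from any direct Sobolev bound. This is exactly why the chain (F)$\Rightarrow$(G) (via Cauchy--Schwarz with (A)) and the moment bound (E) are both indispensable to close the compactness argument.
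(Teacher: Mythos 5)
Your proposal is correct and follows essentially the same route as the paper: mass conservation and the energy/entropy identities \eqref{eq:energy_diss}--\eqref{eq:entropy_diss} for \ref{estim3}, \ref{estim5}, \ref{estim2}; the $|x|^2$ moment computation with Cauchy--Schwarz and Gronwall for \ref{estimnew} (and the control of the negative part of $\rho_\eps\log\rho_\eps$); Gagliardo--Nirenberg for \ref{estim1}; the splitting $\rho_\eps\nabla\Delta\rho_\eps\ast\omega_\eps\ast\omega_\eps=\sqrt{\rho_\eps}\cdot\sqrt{\rho_\eps}\nabla\Delta\rho_\eps\ast\omega_\eps\ast\omega_\eps$ for \ref{estim6}; and Aubin--Lions plus tightness from \ref{estimnew} for \eqref{conv1}--\eqref{conv3}. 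The only (harmless) discrepancy is the constant $2d$ versus $2$ in the moment identity, where your version $\int_{\R^d}|x|^2\Delta\rho_\eps\diff x=2d\int_{\R^d}\rho_\eps\diff x$ is the accurate one.
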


\begin{proof}[Proof of Proposition \ref{prop:bounds}] The existence and uniqueness of solutions is a classical matter as \eqref{eq:nonlocal_PDE_CH} is an advection-diffusion equation with smooth advection (as $\omega_{\eps}$ is smooth).\\
The $L^{1}$ bound in \ref{estim3} is a consequence of mass conservation. \ref{estim5} follows directly from \eqref{eq:energy_diss}. Estimate~\ref{estim6} is a consequence of~\ref{estim3}, \ref{estim5}, Equation~\eqref{eq:nonlocal_PDE_CH} and splitting
$$
\rho_{\eps} \,\nabla\Delta\rho_{\eps}\ast\omega_{\eps}\ast\omega_{\eps} =  \sqrt{\rho_{\eps}} \, \sqrt{\rho_{\eps}}\,\nabla\Delta\rho_{\eps}\ast\omega_{\eps}\ast\omega_{\eps}.
$$
To prove \ref{estim1}, we first deduce from \eqref{eq:energy_diss} bounds on $\nabla \rho_{\eps}\ast\omega_{\eps}$ and $\Delta\rho_{\eps}\ast\omega_{\eps}$. Then, the $L^{\infty}(0,T; L^2(\R^d))$ bound on $\{\rho_{\eps} \ast \omega_{\eps}\}_{\eps}$ follows from bounds on $\{\rho_{\eps}\ast \omega_{\eps} \}_{\eps}$ in $L^{\infty}(0,T; L^1(\R^d))$, $\{\nabla \rho_{\eps} \ast \omega_{\eps}\}_{\eps}$ in $L^{\infty}(0,T; L^2(\R^d))$ and from the Gagliardo-Nirenberg inequality). The estimate in $L^2(0,T; H^2(\R^d))$ is a consequence of the second-order regularizing property of the operator $\Delta$ on the whole space.
\\

To see~\ref{estimnew}, we multiply Equation~\eqref{eq:nonlocal_PDE_CH} by $|x|^{2}$ and obtain after integration by parts 
$$
\f{d}{dt}\int_{\R^d}|x|^{2}\rho_{\eps}\diff x = 2\int_{\R^{d}}\rho_{\eps}\diff x +2 \int_{\R^{d}}\sqrt{\rho_{\eps}} \, x\cdot \sqrt{\rho_{\eps}} \, \nabla\Delta \rho_{\eps}\ast\omega_{\eps}\ast\omega_{\eps}\diff x.
$$
Using \ref{estim3}, \ref{estim5}, the Cauchy-Schwarz inequality  and the Gronwall lemma, we obtain~\ref{estimnew}.\\

It remains to prove the second part of estimate~\ref{estim3} namely the $L\log(L)$ bound on $\rho_{\eps}$. A small difficulty is that the negative part of $\rho_{\eps}\log(\rho_{\eps})$ might not be integrable on the whole space. Nevertheless, as in~\cite{Hecht2023porous}, one can prove that $\rho_{\eps} |\log \rho_{\eps}|_{-}$ is uniformly bounded in $L^{\infty}(0,T; L^1(\R^d))$ by splitting $\R^d$ for $\{x: |\rho_{\eps}|\leq e^{-|x|^2}\}$, $\{x: |\rho_{\eps}| > e^{-|x|^2}\}$ and applying tail estimate \ref{estimnew}. Hence, we can use~\eqref{eq:entropy_diss} and deduce Estimate~\ref{estim2}. Estimate \ref{estim4} follows from~\ref{estim3} and \ref{estim2} by writing $\nabla \rho_{\eps} = \frac{\nabla\rho_{\eps}}{\sqrt{\rho_{\eps}}} \sqrt{\rho_{\eps}}$.\\ 

The convergences \eqref{conv1}--\eqref{conv3} are a consequence of the Lions-Aubin lemma and the Banach-Alaoglu theorem where convergence~\eqref{conv1} has been upgraded from a local to a global one by the tail estimate~\ref{estimnew}. 
\end{proof} 

\section{The proof of the main result}\label{sect:main_result}

We only need to study the term
$
\int_0^T \int_{\R^d} \nabla \varphi \, \rho_{\eps} \, \nabla \Delta (\rho_{\eps} \ast \omega_{\eps} \ast \omega_{\eps}) \diff x \diff t,
$
for test functions $\varphi \in C_c^{\infty}([0,T]\times \R^d)$. Using the properties of the mollifiers
$$
\int_0^T \int_{\R^d} \nabla \varphi \, \rho_{\eps} \, \nabla \Delta (\rho_{\eps} \ast \omega_{\eps} \ast \omega_{\eps}) \diff x \diff t = - 
\int_0^T \int_{\R^d} (\nabla \varphi \, \rho_{\eps}) \ast \nabla \omega_{\eps} \,  \Delta (\rho_{\eps} \ast \omega_{\eps}) \diff x \diff t .
$$
Thanks to the weak convergence \eqref{conv3}, we only need to prove the following strong convergence result
\begin{equation}\label{eq:target}
(\nabla \varphi \, \rho_{\eps}) \ast \nabla \omega_{\eps} \to \nabla \varphi \cdot \nabla \rho + \Delta \varphi\, \rho = \DIV(\nabla \varphi \, \rho) \mbox { strongly in } L^2((0,T)\times\R^d).
\end{equation}
We write $\nabla \varphi(y) = \nabla \varphi(y) - \nabla \varphi(x) + \nabla \varphi(x)$ which results in two terms:
\begin{multline*}
(\nabla \varphi \, \rho_{\eps}) \ast \nabla \omega_{\eps}(x) = \int_{\R^d} \nabla \varphi(y) \, \rho_{\eps} (y) \cdot \, \nabla \omega_{\eps}(x-y) 
\diff y = \nabla \varphi(x) \cdot \nabla  \rho_{\eps} \ast  \omega_{\eps}\, + \\ 
+
\int_{\R^d} (\nabla \varphi(y) - \nabla \varphi(x)) \rho_{\eps}(y) \cdot \nabla \omega_{\eps}(x-y) \diff y  =: I + J.
\end{multline*}
According to \eqref{conv3}, the term $I$ converges strongly in $L^2((0,T)\times\R^d)$ (note that $\varphi$ is compactly supported). The rest of the proof is devoted to the analysis of the term $J$.\\

By Taylor's expansion
$$
\nabla \varphi(y) - \nabla \varphi(x) = D^2 \varphi(x) \,  (y-x)  + R(x,y)
$$
where the term $R$ satisfies $|R| \leq C\, |y-x|^2$. We split $J = J_1 + J_2$, where
$$
J_1 = \int_{\R^d} D^2 \varphi(x) \,  (y-x)  \rho_{\eps}(y) \nabla \omega_{\eps}(x-y) \diff y, \, 
J_2 = \int_{\R^d} R(x,y)\, \rho_{\eps}(y) \nabla \omega_{\eps}(x-y) \diff y. 
$$

\underline{Term $J_1$.} We prove that
$
\lim_{\varepsilon\to0}J_1 = \Delta \varphi\, \rho
$ in $L^2((0,T)\times \R^d)$.
Since $\varphi$ is compactly supported, it is sufficient to prove that
\begin{equation}\label{eq:conv_term_J1}
\int_{\R^d} \rho_{\eps}(y) (x_i-y_i) \partial_j \omega_{\eps}(x-y) \diff y \to -\rho(x)\,\delta_{i,j} \qquad \mbox{ in }  L^2_{\text{loc}}((0,T)\times \R^d).
\end{equation}
Assertion \eqref{eq:conv_term_J1} will be obtained by proving convergence in $L^1((0,T)\times \R^d)$ and uniform boundedness in $L^p((0,T)\times \R^d)$ for some $p>2$. Concerning the convergence in $L^1((0,T)\times\R^d)$, we first change variables
$$
\int_{\R^d} \rho_{\eps}(y) (x_i-y_i) \partial_j \omega_{\eps}(x-y) \diff y = \int_{\R^d} \rho_{\eps}(x-\varepsilon z) z_i\,  \partial_j \omega(z) \diff z
$$
We can estimate in $L^1((0,T)\times\R^d)$ the difference
\begin{multline*}
\int_0^T \int_{\mathbb{R}^d} \left|\int_{\R^d} \left(\rho_{\eps}(x-\varepsilon z) - \rho_{\eps}(x)\right) z_i\,  \partial_j \omega(z) \diff z\right| \diff x \diff t \leq \\
\leq \int_0^T \int_{\mathbb{R}^d} \int_{\R^d} \int_0^1 \varepsilon |\nabla \rho_{\varepsilon}(x-\varepsilon s z)|\,  |z|^2\,  |\nabla \omega(z)| \diff s \diff z \diff x \diff t \leq \varepsilon \|\nabla \rho_{\varepsilon}\|_{L^1_{t,x}} \, \| |z|^2 \nabla \omega \|_{L^1_x},
\end{multline*}
where integrability of $|z|^2 \nabla \omega(z)$ is a consequence of assumption \eqref{eq:ass:omega_double_mollification}:
$$
\int_{\R^d} |z|^2\, |\nabla \omega(z)| \diff z \leq C\,\int_{\R^d} \omega\ast f(z) \diff z = C\,\|\omega\|_{L^1}\,\|f\|_{L^1} \leq C.
$$
Therefore, it is sufficient to study the term $\rho_{\eps}(x) \int_{\R^d} z_i \partial_j \omega(z) \diff z$ which equals $-\rho_{\eps}(x)$ because
$$
\int_{\R^d} z_i \partial_j \omega(z) \diff z = - \delta_{i,j} \int_{\R^d} \omega(z) \diff z = -\delta_{i,j}, 
$$
where the boundary term vanishes thanks to \eqref{ass:growth_omega}. The conclusion follows because $\rho_{\eps}$ is strongly convergent in $L^1((0,T)\times\R^d)$, cf. \eqref{conv1}.\\

Concerning the uniform boundedness in $L^p((0,T)\times\R^d)$ with $p>2$, by nonnegativity of $\rho_{\eps}$, definition of $\omega_{\eps}(x) = \frac{1}{\varepsilon^d}\omega\left(\frac{x}{\varepsilon}\right)$ and assumption \eqref{eq:ass:omega_double_mollification}, 
\begin{equation}\label{eq:estimate_by_double_convolution}
\begin{split}
\left|\int_{\R^d} \rho_{\eps}(y) (x_i-y_i) \partial_j \omega_{\eps}(x-y) \diff y \right| = \left|\int_{\R^d} \rho_{\eps}(y) \frac{(x_i-y_i)}{\varepsilon^{d+1}} \, \partial_j\omega\left(\frac{x-y}{\varepsilon} \right)  \diff y \right|\leq \\
\leq C\,\int_{\R^d} \rho_{\eps}(y)\, \frac{1}{\varepsilon^d} \omega\ast f\left(\frac{x-y}{\varepsilon} \right) \diff y. 
\end{split}
\end{equation}
A change of variables shows that 
\begin{equation}\label{eq:change_variables_double_mollifiers}
\begin{split}
\frac{1}{\varepsilon^d} \omega\ast f\left(\frac{x-y}{\varepsilon} \right) &= \frac{1}{\varepsilon^d}  \int_{\R^d} \omega(z) \, f\left(\frac{x-y}{\varepsilon} - z \right) \diff z =\\ &= \frac{1}{\varepsilon^{2d}}
\int_{\R^d} \omega\left(\frac{z}{\varepsilon}\right) \, f\left(\frac{x-y-z}{\varepsilon} \right) \diff z = \omega_{\eps}\ast f_{\eps}(x-y),
\end{split}
\end{equation}
where $f_{\eps}(x) := \frac{1}{\eps^d} f\left(\frac{x}{\varepsilon} \right)$. Due to \eqref{eq:estimate_by_double_convolution}, 
$$
\left|\int_{\R^d} \rho_{\eps}(y) (x_i-y_i) \partial_j \omega_{\eps}(x-y) \diff y \right| \leq C\, \rho_{\eps} \ast \omega_{\eps} \ast f_{\eps}(x).
$$
Note that by the Gagliardo-Nirenberg inequality and uniform bound in $L^{\infty}(0,T; H^1(\R^d))$, $\{\rho_{\eps} \ast \omega_{\eps}\}_{\eps}$ is bounded in $L^{\infty}(0,T; L^{\frac{2\,d}{d-2}}(\R^d))$ where $\frac{2\,d}{d-2} > 2$. The same is true for $\{\rho_{\eps} \ast \omega_{\eps} \ast f_{\eps}\}_{\eps}$ by the Young convolutional inequality. The conclusion follows.  \\

\underline{Term $J_2$.} We prove that $\lim_{\varepsilon \to 0} J_2 = 0$. By $|R| \leq C\, |y-x|^2$, it is sufficient to prove
$$
\int_{\R^d} \rho_{\eps}(y) \, |x-y|^2 \, |\nabla \omega_{\eps}(x-y)| \diff y \to 0 \qquad \mbox{ in } L^2((0,T)\times\R^d).
$$
Again, we want to use assumption \eqref{eq:ass:omega_double_mollification}. By definition of $\omega_{\eps}$:
\begin{multline*}
\int_{\R^d} \rho_{\eps}(y) \, |x-y|^2 \, |\nabla \omega_{\eps}(x-y)| \diff y = 
\varepsilon \int_{\R^d} \rho_{\eps}(y) \, \frac{1}{\varepsilon^d}\, \left|\frac{x-y}{\varepsilon}\right|^2 \, \left|\nabla \omega\left(\frac{x-y}{\varepsilon}\right)\right| \diff y \leq \\
\leq \varepsilon \int_{\R^d} \rho_{\eps}(y) \, \frac{1}{\varepsilon^d} \omega\ast f\left(\frac{x-y}{\varepsilon}\right) = \varepsilon\,  \rho_{\varepsilon} \ast \omega_{\eps} \ast f_{\eps}(x),
\end{multline*}
where in the last line we applied \eqref{eq:change_variables_double_mollifiers}. By the Young convolutional inequality, $\rho_{\eps} \ast \omega_{\eps} \ast f_{\eps}$ on the (RHS) is bounded in $L^2((0,T)\times\R^d)$ so the conclusion follows. 

\section{Extension to systems}
\label{sect:system}

Motivated by \cite{Hecht2023porous}, we consider the system of $N$ equations
\begin{equation}\label{eq:PDE_system}
\partial_{t} \rho^i_{\eps} - \Delta \rho^i_{\eps} + \DIV \left(\rho^i_{\eps} \nabla \Delta \sum_{j=1}^N K^{i,j}_{\eps} \ast \rho^j_{\eps}\right) = 0,
\end{equation}
where $1\leq i \leq N$ and the kernels $K^{i,j}_{\eps}$ are of the form 
$$
K^{i,j}_\eps = \sum_{k=1}^N \alpha^{i,k}\,\alpha^{j,k} \, \omega^i_{\eps} \ast \omega^j_{\eps}, 
$$
where the $\omega^i$ are kernels satisfying Assumption \ref{ass:kernel_assumption_double_moll_+_growth}. The coefficients $\{\alpha^{i,k}\}$ form a matrix~$A$ and we assume it is invertible. Under these assumptions, for any set of functions $\eta_1$, ..., $\eta_N$:
$$
\sum_{i,j=1}^N \int_{\R^d} \eta^{i}\,K^{i,j}_\eps \ast \eta^{j} = \int_{\R^d} \sum_{k=1}^N \left( \sum_{i=1}^N \alpha^{i,k}\, \eta^{i} \ast \omega^i_{\eps} \right)^2
$$
so that
\begin{equation}\label{eq:lower_bound_due_to_aij}
\widetilde{C}\,\|A^{-1}\|^2 \int_{\R^d}\sum_{i=1}^N \left( \eta^{i} \ast \omega^i_{\eps} \right)^2 \geq \sum_{i,j=1}^N \int_{\R^d} \eta^{i}\,K^{i,j}_\eps \ast \eta^{j}  \geq C\, \|A^{-1}\|^2 \int_{\R^d}\sum_{i=1}^N \left( \eta^{i} \ast \omega^i_{\eps} \right)^2.
\end{equation}

\begin{thm}\label{thm:thm_for_the_system}
Let $\{\rho^i_{\eps}\}_{\eps}$ be a sequence of solutions to \eqref{eq:PDE_system} with initial condition $\rho^{0,i}$ satisfying \eqref{Ass:init_cond}. Then, for $i=1,...,N$, and for a subsequence not relabeled, $\rho^i_{\eps} \to \rho^i$ in $L^p(0,T; L^1(\R^d))$ for all $p \in [1,\infty)$ where $\rho^i$ is a weak solution of 
$$
\partial_{t} \rho^i - \Delta \rho^i + \DIV \left(\rho^i\,  \nabla \Delta \sum_{j=1}^N K^{i,j} \, \rho^j\right) = 0, \qquad K^{i,j} = \sum_{k=1}^N \alpha^{i,k}\,\alpha^{j,k}.
$$
\end{thm}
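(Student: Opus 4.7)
The plan is to mimic the proof of Theorem~\ref{thm:single_eq}: derive system-level energy and entropy dissipation identities, exploit the invertibility of $A$ to convert them into species-wise uniform estimates analogous to those of Proposition~\ref{prop:bounds}, extract strongly convergent subsequences $\rho^i_\eps \to \rho^i$, and pass to the limit in each nonlocal term via the commutator-type argument of Section~\ref{sect:main_result} applied to each pair $(i,j)$ separately.

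First, the symmetry $K^{i,j}_\eps = K^{j,i}_\eps$ (immediate from evenness of the $\omega^i$ and symmetry of $\alpha^{i,k}\alpha^{j,k}$ in $i,j$) allows one to test the $i$-th equation of~\eqref{eq:PDE_system} with $\mu^i_\eps := -\Delta \sum_j K^{i,j}_\eps \ast \rho^j_\eps$, sum over $i$, and obtain the energy identity
\begin{equation*}
\frac{d}{dt}\,\frac{1}{2}\sum_{i,j}\int_{\R^d} \nabla \rho^i_\eps \cdot K^{i,j}_\eps \ast \nabla \rho^j_\eps
+ \sum_{i,j}\int_{\R^d} \Delta \rho^i_\eps \cdot K^{i,j}_\eps \ast \Delta \rho^j_\eps
+ \sum_i \int \rho^i_\eps \,|\nabla \mu^i_\eps|^2 = 0.
\end{equation*}
Testing instead with $\log \rho^i_\eps$ and summing gives the entropy identity
\begin{equation*}
\frac{d}{dt}\sum_i \int \rho^i_\eps \log \rho^i_\eps
+ \sum_i \int \frac{|\nabla \rho^i_\eps|^2}{\rho^i_\eps}
+ \sum_{i,j}\int_{\R^d} \Delta \rho^i_\eps \cdot K^{i,j}_\eps \ast \Delta \rho^j_\eps = 0.
\end{equation*}
By~\eqref{eq:lower_bound_due_to_aij} the quadratic forms $\sum_{i,j}\int \eta^i\, K^{i,j}_\eps \ast \eta^j$ control $\sum_i \|\omega^i_\eps \ast \eta^i\|_{L^2}^2$; applied with $\eta^i = \nabla \rho^i_\eps$ and $\eta^i = \Delta \rho^i_\eps$ this converts the coupled dissipations into the species-wise bounds $\{\omega^i_\eps \ast \nabla \rho^i_\eps\} \in L^\infty(0,T;L^2(\R^d))$ and $\{\omega^i_\eps \ast \Delta \rho^i_\eps\} \in L^2((0,T)\times\R^d)$. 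The remaining estimates of Proposition~\ref{prop:bounds} (mass, second moment, $L\log L$ via tail cut-off, $\nabla\sqrt{\rho^i_\eps}\in L^2$, $\nabla \rho^i_\eps \in L^2(L^1)$, and the time derivative) follow component by component exactly as in the scalar case.

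The Lions--Aubin lemma then supplies strong convergence $\rho^i_\eps \to \rho^i$ in $L^p(0,T;L^1(\R^d))$ for every $p<\infty$, together with $\omega^i_\eps \ast \rho^i_\eps \rightharpoonup \rho^i$ in $L^\infty(0,T;H^1)\cap L^2(0,T;H^2)$ and strong convergence in $L^2(0,T;H^1_{\mathrm{loc}})$. In the weak formulation against test functions $\varphi^i\in C_c^\infty([0,T)\times\R^d)$, a single integration by parts reduces the only nontrivial term to
\begin{equation*}
-\sum_{i,j,k}\alpha^{i,k}\alpha^{j,k}\int_0^T\!\!\int_{\R^d} \bigl[(\nabla \varphi^i\, \rho^i_\eps) \ast \nabla \omega^i_\eps\bigr] \cdot \Delta(\omega^j_\eps \ast \rho^j_\eps)\diff x\diff t .
\end{equation*}
Since $\Delta(\omega^j_\eps \ast \rho^j_\eps) \rightharpoonup \Delta \rho^j$ weakly in $L^2$, it suffices to establish that $(\nabla \varphi^i\, \rho^i_\eps) \ast \nabla \omega^i_\eps \to \DIV(\nabla \varphi^i\, \rho^i)$ strongly in $L^2((0,T)\times\R^d)$ for each $i$; because each $\omega^i$ satisfies Assumption~\ref{ass:kernel_assumption_double_moll_+_growth}, this is precisely the content of Section~\ref{sect:main_result}, with the Taylor splitting $J = J_1 + J_2$ and the pointwise domination of $|x|^{k}|\nabla \omega^i_\eps|$ by $\omega^i_\eps \ast f^i_\eps$ carrying over verbatim. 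Pairing weak and strong convergences and summing over $(i,j,k)$ produces the limit equation with $K^{i,j} = \sum_k \alpha^{i,k}\alpha^{j,k}$.

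I expect the main technical point to be the conversion of the coupled quadratic dissipations into species-wise estimates, which is exactly where invertibility of $A$ is indispensable through~\eqref{eq:lower_bound_due_to_aij}: without it one only controls combinations $\sum_i \alpha^{i,k}\omega^i_\eps \ast \Delta \rho^i_\eps$ and cannot identify the limits of individual species. Everything else is a routine bookkeeping over the finite index set $\{1,\dots,N\}$, since the nonlocal term in the weak formulation decouples into a finite sum of pair contributions each of which is handled by the scalar machinery.
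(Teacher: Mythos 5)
Your proposal is correct and follows essentially the same route as the paper: system energy and entropy identities, the lower bound \eqref{eq:lower_bound_due_to_aij} (via invertibility of $A$) to convert the coupled dissipations into species-wise analogues of Proposition~\ref{prop:bounds}, and then a pairwise application of the scalar commutator convergence \eqref{eq:target} against the weak $L^2$ convergence of $\Delta(\rho^j_\eps \ast \omega^j_\eps)$. No substantive differences to report.
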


We first extend the uniform bounds in Proposition \ref{prop:bounds} to the case of system \eqref{eq:PDE_system}.

\begin{proposition}\label{prop:bounds_systems}
Suppose that for all $i =1, ..., N$, the initial conditions $\rho^{0,i}$ satisfy \eqref{Ass:init_cond}. Then, the nonnegative solution to \eqref{eq:PDE_system} satisfies the following bounds, uniformly with respect to~$\eps$:
\begin{enumerate}[label=(\Alph*)]
\item $\{\rho^i_{\eps}\}_{\eps}\in L^{\infty}(0,T;L^{1}(\R^{d})\cap L\log L(\R^{d}))$,\label{estim3_sys}
\item $\{\p_{t}\rho^i_{\eps}\}_{\eps}\in L^{2}(0,T;H^{-k}(\R^{d}))$ for some $k$, \label{estim6_sys}
\item $\{\sqrt{\rho^i_{\eps}}\, \nabla \Delta \sum_{j=1}^N K^{i,j}_{\eps} \ast \rho^j_{\eps} \}_{\eps}\in L^{2}((0,T)\times\R^{d})$,\label{estim5_sys}
\item $\{\rho^i_{\eps}\ast\omega^i_{\eps}\}_{\eps}\in L^{\infty}(0,T;H^{1}(\R^{d}))\cap L^{2}(0,T;H^{2}(\R^{d}))$, \label{estim1_sys}
\item $\{|x|^{2}\rho^i_{\eps}\}_{\eps}\in L^{\infty}(0,T;L^{1}(\R^{d}))$,\label{estimnew_sys}
\item $\{\nabla\sqrt{\rho^i_{\eps}}\}_{\eps} \in L^{2}(0,T;L^{2}(\R^{d}))$,\label{estim2_sys}
\item $\{\nabla \rho^i_{\eps}\}_{\eps}\in L^{2}(0,T; L^1(\R^{d}))$,\label{estim4_sys}

\end{enumerate}
Moreover, we can extract a subsequence such that for all $i = 1, ..., N$
\begin{align}
&\rho^i_{\eps}\to\rho^i \, \text{ strongly in $L^{p}(0,T;L^{1}(\Rd))$},\, p<\infty \label{conv1_sys}\\
& \rho^i_{\eps}\ast \omega^i_{\eps}\rightharpoonup \rho^i \, \text{ weakly in $L^{\infty}(0,T;H^{1}(\Rd))\cap L^{2}(0,T;H^{2}(\R^{d}))$},\label{conv2_sys}\\
& \rho^i_{\eps}\ast \omega^i_{\eps}\to \rho^i \,\text{ a.e. and strongly in $L^{2}(0,T;H^{1}_{\text{loc}}(\R^{d}))$}.\label{conv3_sys} 
\end{align}
\end{proposition}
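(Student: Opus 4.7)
The plan is to follow the strategy of Proposition~\ref{prop:bounds}, adapting each computation to the coupled system and using the special matrix structure of the $K^{i,j}_\eps$. Mass conservation for each component is obtained by integrating the $i$-th equation in space and yields the $L^\infty(0,T;L^1)$ part of~\ref{estim3_sys}; the time-derivative estimate~\ref{estim6_sys} will follow at the end from the splitting $\rho^i_\eps \nabla\Delta\sum_j K^{i,j}_\eps \ast \rho^j_\eps = \sqrt{\rho^i_\eps}\cdot \sqrt{\rho^i_\eps}\,\nabla\Delta \sum_j K^{i,j}_\eps \ast \rho^j_\eps$ once~\ref{estim5_sys} is available. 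The moment bound~\ref{estimnew_sys} is obtained exactly as in Proposition~\ref{prop:bounds} by testing with $|x|^2$ and applying Cauchy--Schwarz against~\ref{estim5_sys}; the $L\log L$ part of~\ref{estim3_sys} then follows by the usual splitting on $\{\rho^i_\eps \lessgtr e^{-|x|^2}\}$ combined with~\ref{estimnew_sys}.

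The heart of the proof is the energy estimate. Define
$$
E_\eps = \frac{1}{2}\sum_{i,j=1}^N \int_{\R^d} \nabla \rho^i_\eps \cdot \nabla K^{i,j}_\eps \ast \rho^j_\eps \diff x = \frac{1}{2}\sum_{k=1}^N \int_{\R^d}\left|\sum_{i=1}^N \alpha^{i,k}\, \omega^i_\eps \ast \nabla \rho^i_\eps\right|^2 \diff x \geq 0,
$$
where the second equality uses $K^{i,j}_\eps = \sum_k \alpha^{i,k}\alpha^{j,k}\, \omega^i_\eps \ast \omega^j_\eps$ and the symmetry $\omega^i(x)=\omega^i(-x)$. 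Differentiating in time, substituting $\partial_t \rho^i_\eps$ from~\eqref{eq:PDE_system}, using $K^{i,j}_\eps = K^{j,i}_\eps$, and integrating by parts yields
$$
\frac{dE_\eps}{dt} + \sum_{k=1}^N \int_{\R^d} \left|\sum_{i=1}^N \alpha^{i,k}\, \omega^i_\eps \ast \Delta \rho^i_\eps\right|^2 \diff x + \sum_{i=1}^N \int_{\R^d}\rho^i_\eps \left|\nabla\Delta\sum_{j=1}^N K^{i,j}_\eps \ast \rho^j_\eps\right|^2 \diff x = 0.
$$
The third term gives~\ref{estim5_sys} directly. To deduce~\ref{estim1_sys} I would use invertibility of $A$: writing $u^k = \sum_i \alpha^{i,k} v^i$ as $u = A^T v$, we get $v = (A^T)^{-1}u$, so $L^2$-control on every $u^k$ transfers to $L^2$-control on every $v^i$. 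Applied to $v^i = \omega^i_\eps \ast \nabla\rho^i_\eps$ and $v^i = \omega^i_\eps \ast \Delta\rho^i_\eps$, one obtains bounds on $\omega^i_\eps \ast \nabla\rho^i_\eps$ in $L^\infty(0,T;L^2)$ and on $\omega^i_\eps \ast \Delta\rho^i_\eps$ in $L^2(0,T;L^2)$; together with mass conservation and Gagliardo--Nirenberg this yields the full~\ref{estim1_sys}.

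For the entropy I would multiply the $i$-th equation by $\log \rho^i_\eps$, sum in $i$, and integrate by parts. The coupling term becomes $-\sum_{i,j}\int \nabla\rho^i_\eps \cdot \nabla\Delta K^{i,j}_\eps \ast \rho^j_\eps = \sum_k \int \bigl|\sum_i \alpha^{i,k}\omega^i_\eps \ast \Delta\rho^i_\eps\bigr|^2 \geq 0$ by the same algebraic identity used above, producing a dissipation inequality of the shape of~\eqref{eq:entropy_diss} and thus~\ref{estim2_sys}, from which~\ref{estim4_sys} follows as in Proposition~\ref{prop:bounds}. The convergences~\eqref{conv1_sys}--\eqref{conv3_sys} are then obtained componentwise by Lions--Aubin and Banach--Alaoglu, with the local-to-global upgrade for~\eqref{conv1_sys} coming from~\ref{estimnew_sys}. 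The main obstacle is algebraic rather than analytic: one must verify that every integration by parts in the energy and entropy computations produces a non-negative coupling term, which is precisely guaranteed by the factorization $K^{i,j}_\eps = \sum_k \alpha^{i,k}\alpha^{j,k}\,\omega^i_\eps \ast \omega^j_\eps$, while invertibility of $A$ is the purely linear-algebraic step needed to descend from bounds on the linear combinations $\sum_i \alpha^{i,k}\omega^i_\eps \ast \cdot$ back to bounds on each individual $\omega^i_\eps \ast \cdot$.
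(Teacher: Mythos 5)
Your proposal is correct and takes essentially the same route as the paper: the paper derives the same energy and entropy dissipation identities by testing with $\Delta\sum_j K^{i,j}_\eps\ast\rho^j_\eps$ and $\log\rho^i_\eps$, and packages your linear-algebra step (inverting $A^T$ to pass from the combinations $\sum_i\alpha^{i,k}\,\omega^i_\eps\ast\cdot$ back to each individual $\omega^i_\eps\ast\cdot$) into the two-sided bound \eqref{eq:lower_bound_due_to_aij}. The remaining estimates are imported from Proposition~\ref{prop:bounds} exactly as you describe.
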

\begin{proof}
The proof is almost the same as the proof of Proposition \ref{prop:bounds}. The only difficulty is to obtain the energy and entropy identities corresponding to \eqref{eq:energy_diss} and \eqref{eq:entropy_diss}, respectively.\\

Concerning the energy, we multiply \eqref{eq:PDE_system} with $\Delta \sum_{j=1}^N K^{i,j} \ast \rho^j_{\eps}$, integrating in space and summing up for $i=1,...,N$ yields
\begin{multline*}
\frac{C\,\|A^{-1}\|^2}{2} \sum_{i=1}^N \int_{\R^d} |\nabla \rho^{i}_{\eps} \ast  \omega^i_{\eps} (t,x)|^2 \diff x + C\, \|A^{-1}\|^2 \, \sum_{i=1}^N \int_0^t \int_{\R^d} |\Delta \rho^{i}_{\eps} \ast \omega^{i}|^2 \diff x \diff s \, + \\
+ \sum_{i=1}^ N \int_0^t \int_{\R^d}\rho^i_{\eps} \left| \nabla \Delta \sum_{j=1}^N K^{i,j}_{\eps} \ast \rho^j_{\eps} \right|^2 \diff x \diff s \leq \frac{\widetilde{C}\,\|A\|^2}{2} \sum_{i=1}^N \int_{\R^d} |\nabla \rho^{i,0}|^2 \diff x.
\end{multline*}
This identity implies \ref{estim5_sys} and \ref{estim1_sys}. Estimate \ref{estim6_sys} follows from the PDE \eqref{eq:PDE_system} and \ref{estim5_sys}. Thanks to \ref{estim5_sys}, we also deduce \ref{estimnew_sys}.\\

Concerning the entropy, we multiply \eqref{eq:PDE_system} with $\log \rho^{i}_{\eps}$, integrate in space and sum up to obtain
$$
\sum_{i=1}^{N} \partial_t \int_{\R^d} \rho_{\eps}^i\, (\log \rho_{\eps}^i - 1) \diff x  + \sum_{i=1}^{N} \int_{\R^d} \frac{|\nabla \rho_{\eps}^i|^2}{\rho_{\eps}^i} \diff x
+ \sum_{i,j=1}^N \int_{\R^d} K^{i,j}\ast \Delta \rho^{j}_{\eps} \, \Delta \rho^{i}_{\eps} \diff x = 0.
$$
Applying \eqref{eq:lower_bound_due_to_aij} with $\eta_i = \Delta \rho^{i}_{\eps}$, we deduce
$$
\sum_{i=1}^{N} \partial_t \int_{\R^d} \rho_{\eps}^i\, (\log \rho_{\eps}^i - 1) \diff x  + \sum_{i=1}^{N} \int_{\R^d} \frac{|\nabla \rho_{\eps}^i|^2}{\rho_{\eps}^i} \diff x
+ C\, \|A^{-1}\|\, \sum_{i=1}^N \int_{\R^d} |\Delta \rho^{i}_{\eps} \ast \omega^{i}_{\eps}|^2 \diff x \leq 0.
$$
As in the proof of Proposition \ref{prop:bounds}, one may check that $\rho_{\eps}^i\, |\log \rho_{\eps}^i|_{-}$ is uniformly bounded in $L^{\infty}(0,T; L^1(\R^d))$ which implies \ref{estim3_sys}, \ref{estim2_sys} and~\ref{estim4_sys}.
 The convergences \eqref{conv1_sys}--\eqref{conv3_sys} easily follow from the estimates.
\end{proof}
\begin{proof}[Proof of Theorem \ref{thm:thm_for_the_system}]
By linearity, we only need to explain how to pass to the limit in the term 
$$
\int_0^T \int_{\R^d} \nabla \varphi \, \rho^i_{\eps} \, \nabla \Delta (\rho^j_{\eps} \ast \omega^j_{\eps} \ast \omega^i_{\eps}) \diff x \diff t = -\int_0^T \int_{\R^d} (\nabla \varphi \, \rho^i_{\eps}) \ast \nabla \omega^{i}_{\eps} \, \Delta (\rho^j_{\eps} \ast \omega^j_{\eps}) \diff x \diff t .
$$
However, in the proof of Theorem \ref{thm:single_eq}, we proved that 
$$
(\nabla \varphi \, \rho^i_{\eps}) \ast \nabla \omega^{i}_{\eps} \to \nabla \varphi \cdot \nabla \rho^i +\Delta \varphi\, \rho^i \mbox { strongly in } L^2((0,T)\times\R^d),
$$
see \eqref{eq:target}. Thanks to the weak convergence $\Delta (\rho^j_{\eps} \ast \omega^j_{\eps}) \weak \Delta \rho^j_{\eps}$ in \eqref{conv3_sys}, we conclude the proof. 
\end{proof}

\appendix
\section{Proof of the convergence for general kernels and $d=2$ }\label{app:conv_d=2_higher_integrability}

In dimension $d=2$ another proof of the main result uses weaker assumptions, namely
$$
d=2, \qquad  y \, \omega(y) \in L^1(\R^d), \qquad  y\, \nabla \omega(y) \in L^2(\R^{d}).
$$

As in the main proof, we only need to study term $
\int_0^T \int_{\R^d} \nabla \varphi \, \rho_{\eps} \, \nabla \Delta (\rho_{\eps} \ast \omega_{\eps} \ast \omega_{\eps}) \diff x \diff t,
$
where $\varphi \in C_c^{\infty}([0,T]\times \R^d)$. Integrating by parts
$$
\int_0^T \int_{\R^d} \nabla \varphi \, \rho_{\eps} \, \nabla \Delta (\rho_{\eps} \ast \omega_{\eps} \ast \omega_{\eps}) \diff x \diff t = - \int_0^T \int_{\R^d} \DIV(\nabla \varphi \rho_{\eps})\ast \omega_{\eps}\,  \Delta (\rho_{\eps} \ast \omega_{\eps} ) \diff x \diff t $$
According to the a priori estimate \ref{estim1}, we need to prove that
$
\DIV(\nabla \varphi \rho_{\eps})\ast \omega_{\eps}  
$
converges strongly in $L^2((0,T)\times\R^d)$. We introduce the truncation operator $$
T_M(\rho) = \begin{cases}
\rho &\mbox{ if } \rho \leq M,\\
M &\mbox{ if } \rho >M,
\end{cases}
$$
so that splitting $\rho_{\eps} = \rho_{\eps} - T_M(\rho_{\eps}) + T_M(\rho_{\eps})$ we have
\begin{align*}
\DIV(\nabla \varphi \rho_{\eps})\ast \omega_{\eps} &= \DIV(\nabla \varphi T_M(\rho_{\eps}))\ast \omega_{\eps} + \DIV(\nabla \varphi(\rho_{\eps} - T_M(\rho_{\eps})))\ast \omega_{\eps} \\
&= (\Delta \varphi T_M(\rho_{\eps}))\ast \omega_{\eps} + ( \nabla \varphi \cdot \nabla \rho_{\eps} \mathds{1}_{\rho_{\eps}\leq M})\ast \omega_{\eps} + (\nabla \varphi (\rho_{\eps} - T_M(\rho_{\eps})))\ast \nabla \omega_{\eps} \\
&=: I_1 + I_2 + I_3.
\end{align*}
The parameter $M$ will be chosen later in terms of $\varepsilon$ so that $M \to \infty$ as $\varepsilon \to 0$.
\underline{Term $I_1$.} We write
\begin{align*}
I_1(t,x) &= \int_{\R^d} (\Delta \varphi(x-y) - \Delta \varphi(x)) T_M(\rho_{\eps})(x-y)\, \omega_{\eps}(y) \diff y  + \Delta \varphi (T_M(\rho_{\eps}))\ast \omega_{\eps}\\
&= I_1^A + I_1^B.
\end{align*}
As $|\Delta \varphi(x-y) - \Delta \varphi(x)| \leq C\, |y|$, we can estimate
$$
\|I_1^A\|_{L^2_{t,x}} \leq \sqrt{M}\, \|\sqrt{\rho_{\eps}}\|_{L^2_{t,x}} \, \||y| \omega_{\eps}(y)\|_{L^1} \leq \varepsilon\, \sqrt{M} \, \|\sqrt{\rho_{\eps}}\|_{L^2_{t,x}}\, \||y| \omega(y)\|_{L^1} 
$$
so that $\|I_1^A\|_{L^2_{t,x}} \leq C\, \varepsilon\, \sqrt{M}$. Furthermore, note that the term $I_1^B$ is compact in $L^2((0,T)\times\R^d)$ whenever $M \to \infty$, $\varepsilon \to 0$. To see this, first note that it is sufficient to establish local compactness as $\varphi$ is compactly supported. The latter can be proved by the Vitali theorem: we have convergence in measure (even in $L^1_{\text{loc}}((0,T)\times\R^d)$) of $T_M(\rho_{\eps})\ast\omega_{\eps}$ and uniform integrability thanks to the pointwise estimate
$$
0 \leq T_M(\rho_{\eps})\ast\omega_{\eps} \leq \rho_{\eps} \ast \omega_{\eps}
$$
since $\rho_{\eps} \ast \omega_{\eps}$ is compact in $L^2_{\text{loc}}((0,T)\times\R^d)$. We conclude that
\begin{equation}\label{eq:compactness_I_1^B}
I_1^B \to \Delta \varphi \, \rho \, \mbox{ in } L^2((0,T)\times \R^d)  \qquad \mbox{ when } \eps \to 0, M \to \infty.
\end{equation}
\underline{Term $I_2$.} We have
\begin{align*}
I_2(t,x) =& \int_{\R^d} (\nabla \varphi(x-y) - \nabla \varphi(x)) \, \nabla \rho_{\eps}(x-y) \mathds{1}_{\rho_{\eps}(x-y) \leq M} \, \omega_{\eps}(y) \diff y \\&+  \nabla \varphi \cdot (\nabla \rho_{\eps} \mathds{1}_{\rho_{\eps}\leq M})\ast \omega_{\eps}=:I_2^A + I_2^B.
\end{align*}
As $(\nabla \varphi(x-y) - \nabla \varphi(x)) \leq C\,|y|$ and $|\nabla \rho_{\eps}(x-y)| \mathds{1}_{\rho_{\eps}(x-y) \leq M} \leq \sqrt{M} \frac{|\nabla \rho_{\eps}(x-y)|}{\sqrt{\rho_{\eps}(x-y)}}$, we can estimate the term $I_2^A$ as follows
$$
\|I_2^A\|_{L^2_{t,x}} \leq \sqrt{M} \, \left\| \frac{\nabla \rho_{\eps}}{
\sqrt{\rho_{\eps}}} \right\|_{L^2_{t,x}} 
\||y| \omega_{\eps}(y)\|_{L^1} \leq
\varepsilon\,\sqrt{M}\, \left\| \frac{\nabla \rho_{\eps}}{
\sqrt{\rho_{\eps}}} \right\|_{L^2_{t,x}} 
\||y| \omega(y)\|_{L^1}
$$
so that $|I_2^A| \leq C\,\varepsilon\,\sqrt{M}$ according to estimate \ref{estim2}.\\

\underline{Term $I_3$.} We write 
\begin{align*}
    I_3(t,x) =& \int_{\R^d} (\nabla \varphi(x-y) - \nabla \varphi(x))\, (\rho_{\eps}(x-y) - T_M(\rho_{\eps}(x-y)))  \nabla \omega_{\eps}(y) \diff y \\
    &+ \nabla \varphi \, (\rho_{\varepsilon} - T_M(\rho_{\varepsilon})) \ast \nabla \omega_{\varepsilon} = I_3^A + I_3^B.
\end{align*}
We observe that $|\nabla \varphi(x-y) - \nabla \varphi(x)|\leq C\,|y|$ and $|\rho_{\eps} - T_M(\rho_{\eps})|\leq 2\,\rho_{\eps}\,\mathds{1}_{\rho_{\eps}\geq M}$ so the term $I_3^A$ can be estimated as
$$
\|I_3^A\|_{L^2_{t,x}} \leq C\,  \| ( \rho_{\eps} \mathds{1}_{\rho_{\eps}\geq M}) \ast (|y||\nabla \omega_{\eps}(y)|)\|_{L^2_{t,x}}.  
$$
By the Gagliardo-Nirenberg-Sobolev inequality, we get that $\{\rho_{\eps}\}$ is uniformly bounded in $L^2((0,T)\times\R^d)$. Therefore,  
$$
\|I_3^A\|_{L^2_{t,x}} \leq C\, \| \rho_{\eps} \mathds{1}_{\rho_{\eps}\geq M}\|_{L^2_{t} L^1_x} \, \||y|\nabla \omega_{\eps}(y)) \|_{L^2} \leq 
C\, \|\rho_{\eps}\|_{L^2_{t,x}} \, \| \mathds{1}_{\rho_{\eps}\geq M} \|_{L^{\infty}_t L^2_x} \, \||y|\nabla \omega_{\eps}(y)) \|_{L^2}.
$$
It remains to estimate $\| \mathds{1}_{\rho_{\eps}\geq M} \|_{L^{\infty}_t L^2_x}$ and $\, \||y|\nabla \omega_{\eps}(y)) \|_{L^2}$. We have
$$
\| \mathds{1}_{\rho_{\eps}\geq M} \|_{L^{\infty}_t L^2_x} \leq \sup_{t\in(0,T)} \left(\int_{\R^d} \frac{\rho_{\eps} \log\rho_{\eps}}{M \log(M)} \diff x \right)^{1/2}  \leq \frac{C}{M^{1/2} \log^{1/2}M},
$$
$$
\||y|\nabla \omega_{\eps}(y)) \|^2_{L^2} \leq \int_{\R^d} \frac{1}{\eps^{2d+2}} |y|^2 \, \left|\nabla \omega\left(\frac{y}{\eps}\right)\right|^2 \diff y = \frac{1}{\eps^{d}}\int_{\R^d} |y|^2 \left|\nabla \omega\left(y\right)\right|^2 \diff y \leq \frac{C}{\varepsilon^2},
$$
since $d=2$ and using mass conservation \ref{estim3}. We conclude that
$$
\|I_3^A\|_{L^2_{t,x}} \leq \frac{C}{\varepsilon\, M^{1/2} \log^{1/2}M}.
$$

\underline{The conclusion.} Note that the terms $I_2^B$ and $I_3^B$ combine to
$$
I_2^B + I_3^B = \nabla \varphi \nabla (\rho_{\eps} \ast \omega_{\eps})
$$
which is compact in $L^2((0,T)\times\R^d)$ and converges to $\nabla \varphi \, \nabla \rho$. Therefore, 
\begin{equation}\label{eq:decomposition_with_error_controlled}
\DIV(\nabla \varphi \rho_{\eps})\ast \omega_{\eps} = \Delta \varphi (T_M(\rho_{\eps}))\ast \omega_{\eps} +  \nabla \varphi \nabla (\rho_{\eps} \ast \omega_{\eps}) + R,
\end{equation}
where the first two terms are compact in $L^2((0,T)\times\R^d)$ (see also \eqref{eq:compactness_I_1^B}) while
$$
\|R\|_{L^2_{t,x}} \leq C\,\varepsilon\,\sqrt{M} + \frac{C}{\varepsilon\,\sqrt{M}\, \log^{1/2}(M)}.
$$
The conclusion follows by choosing $M$ such that $\varepsilon^2 M \log^{1/2} M = 1$.

\begin{rem}
In arbitrary dimension $d$, if we knew that $\{\rho_{\eps}\}_{\eps}$ is uniformly integrable in $L^2((0,T)\times\R^d)$, i.e.
\begin{equation}\label{eq:rho_uniform_integrability}
\lim_{\varepsilon \to 0}  \|\rho_{\eps} \mathds{1}_{\rho_{\eps} > \frac{1}{\eps}}\|_{L^2_{t,x}} = 0,
\end{equation}
we could conclude in an easier way. Indeed, assuming that $y \, \nabla \omega(y) \in L^1(\R^d)$, one can estimate
$$
\|I_3^A\|_{L^2_{t,x}} \leq \| \psi \rho_{\eps} \mathds{1}_{\rho_{\eps}\geq M} \|_{L^2_{t,x}}\, \| |y|\nabla \omega_{\eps}(y)\|_{L^1} \leq C\, \| \psi \rho_{\eps} \mathds{1}_{\rho_{\eps}\geq M} \|_{L^2_{t,x}}.
$$
Choosing $M = \frac{1}{\eps}$, we conclude. The condition \eqref{eq:rho_uniform_integrability} can be relaxed to be satisfied locally when $\omega$ is compactly supported. We stress that we do not have any a priori estimate implying \eqref{eq:rho_uniform_integrability}. 
\end{rem}

\bibliographystyle{abbrv}
\bibliography{fastlimit}
\end{document}